\title[permutations of series]{On the $c_0$-equivalence and permutations of series}
\author{Artur Bartoszewicz}
\author{W\l{}odzimierz Fechner}
\author{Aleksandra \'Swi\k{a}tczak}
\author{Agnieszka Widz}
\address{Institute of Mathematics, Lodz University of Technology, ul. W\'olcza\'nska 215, 90-924 \L\'od\'z, Poland}
\email{artur.bartoszewicz@p.lodz.pl}
\email{wlodzimierz.fechner@p.lodz.pl}
\email{aleswi97@gmail.com}
\email{AgnieszkaWidzENFP@gmail.com}
\newtheorem{thm}{Theorem}
\newtheorem{cor}{Corollary}
\theoremstyle{remark}
\newtheorem{ex}{Example}
\theoremstyle{definition}
\newcommand{\R}{\mathbb{R}}
\newcommand{\N}{\mathbb{N}}
\newcommand{\I}{\mathcal{I}}
\newcommand{\eps}{\varepsilon}
\newcommand{\Hn}{\mathrm{Hn}}
\renewcommand{\(}{\left(} \renewcommand{\)}{\right)}
\keywords{$c_0$-equivalence, center of distances, potentially convergent series, von Neumann's theorem, Riemann rearrangement theorem, hypernumber}
\subjclass[2010]{40A05, 40A35}
\begin{document}

\begin{abstract}
Assume that a convergent series of real numbers $\sum\limits_{n=1}^\infty a_n$ has the property that there exists a set $A\subseteq \N$ such that the series  $\sum\limits_{n \in A} a_n$ is conditionally convergent. We prove that for a given arbitrary sequence $(b_n)$ of real numbers there exists a permutation $\sigma\colon \N \to \N$ such that $\sigma(n) = n$ for every $n \notin A$ and $(b_n)$ is $c_0$-equivalent to a subsequence of the sequence of partial sums of the series $\sum\limits_{n=1}^\infty a_{\sigma(n)}$. 

Moreover, we discuss a connection between our main result with the classical Riemann series theorem.

\end{abstract}

\maketitle 

\section{Introduction}

By $\R$ we denote the set of reals and $\N=\{1, 2, \ldots \}$. By an \emph{ideal}, or \emph{set-ideal} to distinguish from an algebraic ideal, we mean a family $\I\subset \mathcal{P}(\N)$ which is closed under  finite {unions} and contains all subsets of {each of its members}. An ideal is \emph{proper} if it does not contain $\N$ and a proper ideal is \emph{admissible} if it contains all singletons $\{n\}$. 
An ideal $\I$ is called \emph{summable} if there exists a sequence of positive numbers $(a_n)$ such that
 $\sum\limits_{n=1}^\infty a_n=+\infty$ and $A\in \I$ if and only if  $\sum\limits_{n \in A} a_n< + \infty$.

Next, for a given ideal $\I$ we say that a sequence $(x_n)$ of real numbers is \emph{$\I$-bounded} if there exists a constant $\lambda>0$ such that the set $\{n : |x_n|>\lambda \}$ belongs to $\I$. The set of all $\I$-bounded sequences is denoted by $\ell^\infty (\I)$ and is equipped with a semi-norm given by
$$\|(x_n)\|^\I_\infty = \inf\{ \lambda >0 : \{n\in\N : |x_n|>\lambda \}\in \I \}.$$
Further, a sequence $(x_n)$ is termed \emph{$\I$-convergent} to some $x_0 \in \R$ if for every $\eps>0$ the set $\{n\in\N : |x_n-x_0|>\eps \}$ belongs to $\I$. The set of all $\I$-convergent sequences is denoted by $c(\I)$ and by $c_0(\I)$ we mean its subspace of all sequences which are $\I$-convergent to $0$.
We say that two sequences $(x_n)$ and $(y_n)$ of elements of a metric space $(X,d)$ are \emph{$c_0$-equivalent} if
$$\lim_{n \to \infty} d(x_n,y_n)=0.$$ This relation is most often considered in the case of $\ell^\infty$ or of a compact space $X$. Below we illustrate both situations by recalling some applications of this notion.

\begin{thm}[Z. Semadeni {\cite{S}*{Theorem 4.2.2, p.77}}]
The space $\ell^{\infty}/c_0$ is isometrically isomorphic to the space of continuous functions on the remainder $\beta \N \setminus \N$ of the \v{C}ech-Stone compactification of {the} discrete space $\N$.
\end{thm}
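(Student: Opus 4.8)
The plan is to prove this classical identification directly through the Stone--\v{C}ech compactification, bypassing abstract Gelfand duality. The first step is to identify $\ell^\infty$ isometrically with $C(\beta\N)$. Since $\N$ carries the discrete topology, every bounded sequence $(x_n)$ is a bounded continuous real function on $\N$, and by the universal property of $\beta\N$ it extends uniquely to a continuous function $\hat x \in C(\beta\N)$. As $\N$ is dense in $\beta\N$ we get $\|\hat x\|_\infty = \sup_n |x_n| = \|(x_n)\|_\infty$, so $(x_n) \mapsto \hat x$ is an isometric algebra isomorphism of $\ell^\infty$ onto $C(\beta\N)$.

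Second, I would show that under this identification $c_0$ corresponds exactly to the closed ideal $J := \{ f \in C(\beta\N) : f|_{\beta\N \setminus \N} \equiv 0 \}$. The key observation is that $\hat x$ vanishes on the remainder if and only if, for every $\eps > 0$, the set $S_\eps := \{ p \in \beta\N : |\hat x(p)| \ge \eps \}$ is contained in $\N$; but $S_\eps$ is closed in the compact space $\beta\N$, hence compact, and a compact subset of the discrete space $\N$ is finite, so this condition is precisely $x_n \to 0$. Hence $\ell^\infty/c_0$ is isometrically isomorphic to $C(\beta\N)/J$.

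Third, I would invoke the general fact that for a compact Hausdorff space $K$ and a closed subset $L \subseteq K$, the restriction map $r\colon C(K) \to C(L)$, $r(f) = f|_L$, is a surjective unital algebra homomorphism with kernel $\{ f : f|_L = 0 \}$, and the induced map $C(K)/\ker r \to C(L)$ is an \emph{isometric} isomorphism, surjectivity being a consequence of the Tietze extension theorem. Applying this with $K = \beta\N$ and $L = \beta\N \setminus \N$, and combining with the previous two steps, yields $\ell^\infty/c_0 \cong C(\beta\N \setminus \N)$.

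The main point requiring actual work is the isometry claim in the third step: one must check that the quotient norm of $f + \ker r$ equals $\sup_{x \in L} |f(x)|$. The inequality ``$\ge$'' is immediate since every representative of the coset restricts to $f|_L$, while the reverse inequality needs the norm-preserving version of Tietze's theorem, namely that a continuous function on $L$ bounded in absolute value by $M$ extends to a continuous function on $K$ with the same bound. Everything else in the argument is routine verification.
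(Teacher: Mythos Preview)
Your argument is correct and follows the standard route: identify $\ell^\infty$ with $C(\beta\N)$ via the universal property, check that $c_0$ corresponds to the ideal of functions vanishing on the remainder, and then use the norm-preserving Tietze extension theorem to show that restriction induces an isometric isomorphism $C(\beta\N)/J \to C(\beta\N\setminus\N)$. Each step is handled properly, and you correctly isolate the only non-trivial point, namely the reverse inequality in the quotient-norm computation.

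There is nothing to compare against, however: the paper does not prove this theorem. It is quoted verbatim from Semadeni's monograph as a known background result, with no accompanying argument. The authors use it only to motivate the ideal-version generalization due to Bartoszewicz, G\l\c{a}b and Wachowicz that is stated immediately afterward. So your write-up supplies a proof where the paper simply gives a citation.
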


In \cite{BGW} the authors have generalized this by proving a set-ideal version of the above theorem. Given an admissible ideal $\I\subset \mathcal{P}(\N)$ by $P_\I$ we denote the set of all proper ultrafilters $p$ in $\mathcal{P}(\N)$ such that $\bigcap p = \varnothing$ and the family $p^*$ of the complements of all elements of $p$ contains $\I$. The family $P_\I$ is a closed subset of $\beta\N$ (see B. Balcar, F. Simon \cite{BS}) and therefore it makes sense to consider the space $C(P_\I)$ of all continuous functions on $P_\I$.

\begin{thm}[A. Bartoszewicz, Sz. G\l \c{a}b, A. Wachowicz {\cite{BGW}*{Theorem 1}}]
For every admissible ideal $\I\subset \mathcal{P}(\N)$ the spaces $\ell^{\infty}(\I)/c_0(\I)$ and $C(P_\I)$ are isometrically isomorphic.
\end{thm}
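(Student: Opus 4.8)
The plan is to realise the isomorphism through the \v{C}ech--Stone / Gelfand picture of $\ell^\infty$, restricted to the closed set $P_\I$. One first notes that by truncation $\ell^\infty(\I)=\ell^\infty+c_0(\I)$ (replace the entries exceeding a fixed level $\lambda>\|x\|_\infty^\I$ by $0$; the discarded part is supported on a set of $\I$, hence lies in $c_0(\I)$), so modulo $c_0(\I)$ everything reduces to honestly bounded sequences. Define $\Phi\colon\ell^\infty(\I)\to C(P_\I)$ by $\Phi(x)(p)=\lim_p x_n$, the limit of $(x_n)$ along the ultrafilter $p$. This is legitimate because for $\lambda>\|x\|_\infty^\I$ the set $\{n:|x_n|>\lambda\}$ lies in $\I$, hence --- $\I\subseteq p^*$ being equivalent to $\I\cap p=\varnothing$ --- it is not in $p$, so $(x_n)$ takes values in the compact interval $[-\lambda,\lambda]$ on a member of $p$ and the ultrafilter limit exists; and if $y$ is the bounded sequence agreeing with $x$ where $|x_n|\le\lambda$ and equal to $0$ elsewhere, then $x-y\in c_0(\I)$ and $\Phi(x)$ is the restriction to $P_\I$ of the continuous \v{C}ech--Stone extension of $y$, so $\Phi(x)\in C(P_\I)$. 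Linearity of $\Phi$ is immediate (and $\Phi$ is moreover multiplicative).

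The heart of the matter is the computation of $\ker\Phi$ and the proof that the induced map is isometric, and both rest on an elementary observation: for $A\subseteq\N$ one has $A\notin\I$ if and only if $A\in p$ for some $p\in P_\I$. Indeed, if $A\notin\I$, then, $\I$ being closed under finite unions and proper, the family $\{A\}\cup\{\N\setminus I:I\in\I\}$ has the finite intersection property and so extends to an ultrafilter $p$; every $\N\setminus I$ belongs to $p$, so $\I\subseteq p^*$, and since $\I$ is admissible every singleton's complement belongs to $p$, whence $\bigcap p=\varnothing$; thus $p\in P_\I$. The converse is clear. Granting this: if $\Phi(x)=0$ but $x\notin c_0(\I)$, choose $\eps>0$ with $B:=\{n:|x_n|>\eps\}\notin\I$; then one of $B^{+}=\{n:x_n>\eps\}$, $B^{-}=\{n:x_n<-\eps\}$ is not in $\I$, say $B^{+}$, and for $p\in P_\I$ with $B^{+}\in p$ one gets $\Phi(x)(p)=\lim_p x_n\ge\eps>0$, a contradiction; conversely $c_0(\I)\subseteq\ker\Phi$ is obvious. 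For the norm: $\Phi$ annihilates $c_0(\I)$ and $|\Phi(x)(p)|\le\|x\|_\infty^\I$, so the induced map $\ell^\infty(\I)/c_0(\I)\to C(P_\I)$ is a contraction; and if $M=\|\Phi(x)\|_{C(P_\I)}$, then $\{n:|x_n|\ge M+\eps\}$ must lie in $\I$ (otherwise the observation produces $p\in P_\I$ with $|\Phi(x)(p)|\ge M+\eps$), so zeroing out $x$ there gives $z\in c_0(\I)$ with $\|x-z\|_\infty^\I\le M+\eps$; letting $\eps\to0$ shows the induced map is isometric.

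Finally, surjectivity: given $h\in C(P_\I)$, the set $P_\I$ being closed in the compact Hausdorff space $\beta\N$, Tietze's extension theorem yields $\widetilde h\in C(\beta\N)$ with $\widetilde h|_{P_\I}=h$, and then the bounded sequence $x:=(\widetilde h(n))_n\in\ell^\infty\subseteq\ell^\infty(\I)$ satisfies $\Phi(x)=\widetilde h|_{P_\I}=h$. Hence $\Phi$ descends to a surjective linear isometry of $\ell^\infty(\I)/c_0(\I)$ onto $C(P_\I)$ (in fact an algebra isomorphism), which is the assertion. I expect the main obstacle to be the middle paragraph: realising a prescribed non-$\I$ set by an ultrafilter in $P_\I$ and then extracting a genuinely large ultrafilter limit despite the possible oscillation of $(x_n)$ (the sign-splitting of $B$), together with making the norm estimate tight. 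Once $P_\I$ is known to be closed in $\beta\N$ --- which the excerpt grants --- the surjectivity is a routine application of Tietze.
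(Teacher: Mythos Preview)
The paper does not prove this theorem: it is stated in the Introduction purely as background, with attribution to \cite{BGW}, and no argument is supplied. There is therefore nothing in the present paper to compare your proof against.

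That said, your argument is correct and is essentially the expected Gelfand/\v{C}ech--Stone proof. A couple of minor remarks. First, your detour through ``zeroing out $x$ on $\{n:|x_n|\ge M+\eps\}$'' is unnecessary: since $c_0(\I)$ is exactly the kernel of the seminorm $\|\cdot\|_\infty^\I$, the quotient norm of $[x]$ \emph{is} $\|x\|_\infty^\I$, so once you have shown $\{n:|x_n|\ge M+\eps\}\in\I$ you may conclude $\|x\|_\infty^\I\le M$ directly. Second, in the key observation you should perhaps make explicit that the finite intersection property of $\{A\}\cup\{\N\setminus I:I\in\I\}$ uses not only that $\I$ is closed under finite unions but also hereditary: if $A\cap(\N\setminus I_1)\cap\cdots\cap(\N\setminus I_k)=\varnothing$ then $A\subseteq I_1\cup\cdots\cup I_k\in\I$, whence $A\in\I$. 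Both points are cosmetic; the proof stands.
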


Roughly speaking, the above statement says that in Semadeni's result one can use the seminorm $\|\cdot \|^{\mathrm{Fin}}_\infty $ on the space $\ell^\infty$, where $\mathrm{Fin}$ is the ideal of finite subsets of $\N$, obtaining the same effect.

\medskip

{According to a} well-known theorem of von-Neumann, two sequences $(x_n)$, $(y_n)$ of elements of a compact metric space have the same set of limit points if and only if $(x_n)$ is $c_0$-equivalent to some permutation of $(y_n)$. Recently W. Bielas, Sz. Plewik and M. Walczy\'{n}ska \cite{BPW}*{Theorem 2.1} have generalized one implication of the von-Neumann's theorem using the so-called ``back-and-forth'' method. 
In a paper {by} M. Banakiewicz, A. Bartoszewicz {and} F. Prus-Wi\'sniowski \cite{BBPW} the reverse implication to this theorem was proved.
Given a metric space $(X,d)$ the \emph{center of distances} of $X$ is defined as
$$ Z(X) = \{ \alpha \geq 0 : \forall {x \in X}, \exists {y\in X} \  \text{such that} \  d(x,y) = \alpha \}.$$ 

\begin{thm}[ {\cite{BBPW}*{Theorem 1.2}}]
Assume that $(X,d)$ is a compact metric space, $(x_n), (y_n)$ are sequences of elements of $X$ which have the same set of limit points $A$.
Then, for a given $\alpha \geq 0$ there exists a permutation $\sigma\colon \N \to \N$ such that $$\lim_{n \to \infty} d(x_n,y_{\sigma(n)})=\alpha$$   if and only if $\alpha \in Z(A)$. 
\end{thm}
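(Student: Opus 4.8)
The plan is to establish the two implications separately: the implication ``$\exists\,\sigma \Rightarrow \alpha\in Z(A)$'' is a short compactness argument, while the converse is a back-and-forth construction, and it is there that the real work lies. For the first implication, assume $\sigma$ is a permutation with $d(x_n,y_{\sigma(n)})\to\alpha$ and fix $x\in A$. Since $A$ is the set of cluster points of $(x_n)$, pick a subsequence with $x_{n_k}\to x$; by compactness of $X$ refine it so that $y_{\sigma(n_k)}\to y$ for some $y\in X$ as well. As $\sigma$ is injective and $n_k\to\infty$, the numbers $\sigma(n_k)$ are pairwise distinct, hence $\sigma(n_k)\to\infty$, so $y$ is a cluster point of $(y_n)$ and thus $y\in A$. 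Continuity of $d$ gives $d(x,y)=\lim_k d(x_{n_k},y_{\sigma(n_k)})=\alpha$, and since $x\in A$ was arbitrary, $\alpha\in Z(A)$.

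For the converse, suppose $\alpha\in Z(A)$. First note that $A$ is a non-empty compact set and that $\mathrm{dist}(x_n,A)\to0$ and $\mathrm{dist}(y_n,A)\to0$; indeed, otherwise a subsequence of, say, $(x_n)$ would stay at distance at least some $\delta>0$ from $A$ while, by compactness, clustering at a point of $X\setminus A$, contradicting the description of $A$. Now construct $\sigma$ as the union of an increasing chain $\sigma_1\subset\sigma_2\subset\cdots$ of finite partial injections between $x$-indices and $y$-indices, built by the back-and-forth method: at an odd step $k$ adjoin to the domain the least $x$-index not yet used and pick a partner $y$-index, and at an even step $k$ adjoin to the range the least $y$-index not yet used and pick a partner $x$-index. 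In either case the partner is chosen so as to make the distance close to $\alpha$: having fixed one point, say $x_n$, pick $a\in A$ with $d(x_n,a)=\mathrm{dist}(x_n,A)$ ($A$ is compact), pick $b\in A$ with $d(a,b)=\alpha$ (this is where $\alpha\in Z(A)$ enters), and pick an \emph{unused} partner index $m$ with $d(y_m,b)<\mathrm{dist}(x_n,A)+2^{-k}$; such an $m$ exists because $b$, lying in $A$, is a cluster point of $(y_n)$, so infinitely many indices qualify while only finitely many are in use. The triangle inequality then gives $\bigl|d(x_n,y_m)-\alpha\bigr|\le 2\,\mathrm{dist}(x_n,A)+2^{-k}$, and symmetrically at even steps with the roles of $x$ and $y$ interchanged.

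By the usual back-and-forth bookkeeping the map $\sigma=\bigcup_k\sigma_k$ is a bijection of $\N$. To verify $d(x_n,y_{\sigma(n)})\to\alpha$, note that the ``anchor'' index handled at step $k$ (the least unused $x$-index at odd steps, the least unused $y$-index at even steps) tends to $\infty$ as $k\to\infty$; hence $\mathrm{dist}(\cdot,A)$ evaluated at these anchors tends to $0$, so the bound $e_k:=2\,\mathrm{dist}(\cdot,A)+2^{-k}$ on the error $\bigl|d(x_n,y_{\sigma(n)})-\alpha\bigr|$ of the single pair $(n,\sigma(n))$ created at step $k$ satisfies $e_k\to0$. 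Given $\eps>0$, only finitely many steps $k$ have $e_k\ge\eps$, and each step creates exactly one pair, so only finitely many $n$ satisfy $\bigl|d(x_n,y_{\sigma(n)})-\alpha\bigr|\ge\eps$. Thus $d(x_n,y_{\sigma(n)})\to\alpha$, as required.

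The main obstacle is making the back-and-forth enumeration cohabit with this metric control: at the steps where the $x$-index (or $y$-index) is forced to be the least unused one it may be small, so $x_n$ (or $y_m$) may lie far from $A$ and that single match may carry a large error. The point is precisely that the anchors march off to infinity, so only finitely many matches are bad and they do not affect the limit; one also has to check, routinely, that the forced choices never conflict with the freely chosen partners, which is automatic since at every step one adjoins only indices not used so far.
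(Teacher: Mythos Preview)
The paper does not give its own proof of this theorem: it is quoted from \cite{BBPW} as background and no argument is supplied in the present paper. So there is nothing to compare your attempt against here.

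That said, your proof is correct and in the spirit the authors allude to when they mention the ``back-and-forth'' method of \cite{BPW}. The forward implication is the standard compactness argument, and your presentation is clean; the only point one might flag is that ``pairwise distinct natural numbers tend to $\infty$'' deserves one extra word (for any $M$ only finitely many terms are $\le M$), but this is routine. For the converse, your back-and-forth construction is sound: the choice of partner via $a\in A$ nearest the anchor, then $b\in A$ at distance $\alpha$ from $a$, then an unused index landing near $b$, gives the error bound $2\,\mathrm{dist}(\text{anchor},A)+2^{-k}$ by two applications of the triangle inequality, and the availability of an unused partner follows because $b\in A$ is a cluster point of the other sequence. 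Your final paragraph correctly identifies and handles the only delicate point, namely that at forced (anchor) steps the error may be large, but the anchors form a strictly increasing sequence of naturals and hence tend to $\infty$, so $\mathrm{dist}(\text{anchor},A)\to 0$ and only finitely many pairs carry a bad error. The surjectivity/injectivity of $\sigma$ via ``usual back-and-forth bookkeeping'' is indeed standard; if a referee pressed you, the one-line inductive claim is that after the $j$-th odd step all of $1,\dots,j$ lie in the domain (and symmetrically for the range).
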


From the above statement one can easily derive the less trivial implication of the von-Neumann's theorem, which corresponds to the case $\alpha = 0$. Moreover, we can formulate an immediate corollary.

\begin{cor}
Assume that $(X,d)$ is a compact metric space and $(x_n)$ is a sequence of elements of $X$ with the set of limit points $A$. Then there exists a permutation $\sigma\colon \N \to \N$ such that $$\lim_{n \to \infty} d(x_n,x_{\sigma(n)})=\alpha$$   if and only if $\alpha \in Z(A)$. 
\end{cor}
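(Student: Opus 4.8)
The plan is simply to read this off the preceding theorem of Banakiewicz, Bartoszewicz and Prus-Wi\'sniowski by specializing the second sequence. Given the compact metric space $(X,d)$, the sequence $(x_n)$, and its set of limit points $A$, apply that theorem to the pair consisting of $(x_n)$ and $(y_n):=(x_n)$. The only hypothesis to check is that the two sequences have the same set of limit points, and here that is a tautology: both limit sets equal $A$ (and $A\neq\varnothing$ since $X$ is compact). The theorem then yields, for each fixed $\alpha\ge 0$, a permutation $\sigma\colon\N\to\N$ with $\lim_{n\to\infty}d(x_n,x_{\sigma(n)})=\alpha$ exactly when $\alpha\in Z(A)$, which is precisely the assertion to be proved.

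If one wanted an argument not routed through the cited theorem, only the forward implication is elementary: fixing $x\in A$, pick a subsequence with $x_{n_k}\to x$, then use compactness of $X$ to pass to a further subsequence along which $x_{\sigma(n_k)}$ converges to some $y$; since a permutation of a sequence has the same set of limit points, $y\in A$, and $d(x,y)=\lim_k d(x_{n_k},x_{\sigma(n_k)})=\alpha$, so $\alpha\in Z(A)$. The reverse implication — constructing, for a prescribed $\alpha\in Z(A)$, a single permutation that realizes the asymptotic distance $\alpha$ simultaneously near every point of $A$ — is the substantive part, and it is exactly the back-and-forth matching supplied by the theorem. I therefore expect no real obstacle: the proof is a one-line reduction, and the only point worth a moment's care is confirming that setting $(y_n)=(x_n)$ imposes no extra demand on $\sigma$ (the theorem puts no constraint on the permutation beyond being a bijection of $\N$).
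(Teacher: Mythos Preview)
Your proposal is correct and matches the paper's own treatment: the corollary is stated as an ``immediate corollary'' with no separate proof, i.e., exactly the specialization $(y_n)=(x_n)$ of the preceding theorem that you describe. Your additional remark sketching the forward implication directly is fine but not needed for the argument.
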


The aim of this note is to show an application of the $c_0$-equivalence in the case of unbounded sequences. Our main theorem can be treated as an extension of the Riemann rearrangement theorem.

\section{Main result}

For an arbitrary number $a\in \R$ we denote $a^+=\max\{ a, 0\}$ and $a^-=a^{+}-a$. A series $\sum\limits_{n=1}^\infty a_n$ is called \emph{potentially conditionally convergent} if the following three conditions hold:
\begin{enumerate}
	\item[(a)] $\sum\limits_{n=1}^\infty a_n^+ = + \infty $,
	\item[(b)] $\sum\limits_{n=1}^\infty a_n^- = + \infty $,
	\item[(c)] $\lim\limits_{n \to \infty} a_n = 0$.
\end{enumerate}
It is clear that every conditionally convergent series is potentially conditionally convergent.
	
Our main theorem reads.

\begin{thm}\label{t}
Let $(a_n)$ be a sequence of real numbers such that the series $\sum\limits_{n=1}^\infty a_n$ is convergent and assume that there exists a set $A\subseteq \N$ such that the series  $\sum\limits_{n \in A} a_n$ is conditionally convergent. Next, let $(b_n)$ be an arbitrary sequence of real numbers. Then there exists a permutation $\sigma\colon \N \to \N$ such that  $\sigma(n) = n$ for every $n \notin A$ and the sequence $(b_n)$ is $c_0$-equivalent to some subsequence of the sequence of partial sums of the series $\sum\limits_{n=1}^\infty a_{\sigma(n)}$.
\end{thm}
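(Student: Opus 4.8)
The plan is to reduce the assertion to a multi-target refinement of the Riemann rearrangement theorem applied to the subseries indexed by $A$. Since $\sum_{n\in A}a_n$ is conditionally convergent, $A$ is infinite; list it as $A=\{m_1<m_2<\cdots\}$. From the convergence of $\sum_{n=1}^\infty a_n$ together with that of $\sum_{n\in A}a_n$ (summed in the natural order, which agrees with the increasing order of $A$) it follows that $\sum_{n\notin A}a_n$ converges; write $L$ for its sum, so $\sum_{n\le N,\,n\notin A}a_n\to L$. An admissible $\sigma$ restricts to a bijection of $A$ and is the identity off $A$, hence is given by a permutation $\pi$ of $\N$ via $\sigma(m_j)=m_{\pi(j)}$, and
$$S_N:=\sum_{n=1}^{N}a_{\sigma(n)}=\sum_{\substack{n\le N\\ n\notin A}}a_n+\sum_{j=1}^{J(N)}a_{m_{\pi(j)}},\qquad J(N):=\#\{\,j:m_j\le N\,\}.$$
Because the first summand converges to $L$ and $J(\cdot)$ assumes every value in $\{0,1,2,\dots\}$, it is enough to produce $\pi$ and a strictly increasing sequence $(J_k)$ of positive integers with $\sum_{j=1}^{J_k}a_{m_{\pi(j)}}\to b_k-L$; then $N_k:=m_{J_k}$ increases to $\infty$ and $S_{N_k}-b_k\to0$, as required.

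This isolates the following claim, to be used with $y_j:=a_{m_j}$ (conditionally convergent, by hypothesis) and $c_k:=b_k-L$: \emph{if $\sum_j y_j$ is conditionally convergent and $(c_k)$ is any sequence of reals, then there are a permutation $\pi$ of $\N$ and indices $J_1<J_2<\cdots$ with $\sum_{j=1}^{J_k}y_{\pi(j)}\to c_k$.} I would prove it by the greedy construction underlying Riemann's theorem, carried out in consecutive stages. Let $P$, $Q\subseteq\N$ index the positive and the negative terms; conditional convergence yields that both are infinite, $\sum_{j\in P}y_j=+\infty$, $\sum_{j\in Q}y_j=-\infty$, and $y_j\to0$ (any zero terms are spliced in along the way and affect nothing). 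In stage $k$, with $s$ the current partial sum: append hitherto-unused positive terms in order of index until the running sum first exceeds $c_k$, then unused negative terms until it first falls below $c_k$, then positive again, and so on; after each crossing the partial sum differs from $c_k$ by at most the magnitude of the term that produced the crossing, and since the magnitudes of successively used terms tend to $0$, after finitely many crossings the partial sum is within $1/k$ of $c_k$. Declare $J_k$ to be that position and move on to stage $k+1$.

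The main point requiring care, and the only real obstacle, is bookkeeping. One must check that every stage uses only finitely many terms — it does, since each ``append until the sum crosses $c_k$'' instruction halts (the remaining pool of that sign still has divergent sum) — so that infinitely many positive and infinitely many negative terms, still with sums $\pm\infty$, survive into every later stage. One must also ensure that $\pi$ is an honest bijection of $\N$; for this I would prepend to stage $k$ the instruction to first append enough unused positive terms and then enough unused negative terms so that the first $k$ still-unused terms of each sign get consumed, and only afterwards run the crossing procedure (starting, as always, by appending positive or negative terms according to whether the current sum is below or above $c_k$). Since only finitely many extra terms are involved, the stage still ends with the partial sum within $1/k$ of $c_k$, and now every index is eventually used. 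The remaining checks — strict monotonicity of $(J_k)$, harmlessness of the zero terms, and converting ``within $1/k$'' into the stated limit — are routine.
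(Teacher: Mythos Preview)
Your argument is correct and follows the same underlying strategy as the paper: fix $\sigma=\mathrm{id}$ on $\N\setminus A$, then build $\sigma$ on $A$ stage by stage via a Riemann-type greedy construction so that the partial sums at the stage endpoints hit the prescribed targets, inserting the smallest unused indices along the way to force surjectivity.

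The one organizational difference worth mentioning is how the complement $\N\setminus A$ is handled. You pass to the limit $L=\sum_{n\notin A}a_n$ once and for all and shift the targets to $c_k=b_k-L$, which cleanly decouples the problem into a standalone lemma about a single conditionally convergent series $\sum_j y_j$. The paper instead keeps the $A^c$-contribution in play at every step: it chooses an auxiliary index $k'_{n+1}$ via the Cauchy condition so that the $A^c$-tail beyond $k'_{n+1}$ is small, and then aims the $A$-part at the moving target $b_{n+1}-\sum_{j\le k_n}a_{\sigma(j)}-\sum_{k_n<j\le k'_{n+1},\,j\notin A}a_j$. Your packaging is arguably tidier; the paper's avoids naming $L$ but pays for it with an extra $\varepsilon/2$ split at each stage. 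For surjectivity the paper is slightly more economical (one forced index per stage, the least unused element of $A$, versus your $2k$ terms at stage $k$), but both mechanisms work.
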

\begin{proof}
Note that the series  $\sum\limits_{n=1}^\infty a_n$ is conditionally convergent. In particular, we have $\sum\limits_{n=1}^\infty a_n^+ = + \infty $ and
$\sum\limits_{n=1}^\infty a_n^- = + \infty $.  

We will inductively construct a bijection $\sigma\colon \N \to \N$  and a strictly increasing sequence of positive integers $(k_n)$ such that $S_{k_n}-b_n$ tends to $0$ as $n \to \infty$, where $S_{k_n}=\sum\limits_{j=1}^{k_n}a_{\sigma(j)}$. 

First, we put $\sigma(j) = j$ for all $j \notin A$. {Let us put} $k_0 =0$ and observe that for $n=0$ {the inductive assumptions are trivially satisfied, so we have the base for our induction}.

If $n>0$ let us assume that we have found positive integers $k_1, k_2, \dots , k_n$ which form a strictly increasing sequence, the values $\sigma(j)$ are defined for every integer $j\in\{1, \dots, k_n\}$, one has $\sigma(j) \in A$ whenever $j \in A$ and 
\begin{equation}\label{Sm}
\left|S_{k_m}-b_m\right|\leq \frac{1}{m}, \quad {for} \quad m \leq n.
\end{equation} 
If $n=0$, then we do not assume that \eqref{Sm} holds and moreover, any value of $\sigma$ is not defined at the moment. All subsequent arguments remain the same.

In case $n>0$ for $m<n$ let $k^*_m$ be the smallest integer which is greater than $k_m$ and belongs to $A$. {Let us assume moreover that} $\sigma(k_m^*)\in A$ is the smallest positive integer which does not belong to the set $\{\sigma(j) : j\leq k_m\}$. {This assumption will be used to obtain the surjectivity of} $\sigma$.

Since the series $\sum\limits_{j \notin A} a_j$ is convergent, then it satisfies the Cauchy condition. Therefore there exists a positive integer $k'_{n+1}>k_n$ such that for $q > k'_{n+1}$ one has
\begin{equation}
\left| \sum\limits_{j=1  , \, j \notin A}^{q} a_j - \sum\limits_{j=1  , \, j \notin A}^{k'_{n+1}} a_j  \right|   = \left|  \sum\limits_{j=k'_{n+1} +1  , \, j \notin A}^{q} a_j    \right|   \leq \frac{1}{2(n+1)}.\label{n+1}
\end{equation}
Now, we are able to define a positive integer $k_{n+1}>k'_{n+1}$  and values $\sigma(j)$ for $k_n+1\leq j \leq k_{n+1}$ in such a way that the following two conditions hold:
\begin{enumerate}
	\item[(A)] $\sigma(k^*_n)$ is the smallest positive integer in $A$ which does not appear in the set  $\{\sigma(j) : j\leq k_n,\, j \in A\}$,
	\item[(B)] the following estimate holds true:
\end{enumerate}
\begin{equation}\label{bn}
\left|  \sum\limits_{j=k_n+1  , \, j \in A}^{k_{n+1}} a_{\sigma(j)}-\left( b_{n+1} - \sum\limits_{j=1}^{k_{n}} a_{\sigma(j)}-
 \sum\limits_{j=k_n+1  , \, j \notin A}^{k'_{n+1}} a_j   \right)   \right|     \leq \frac{1}{2(n+1)}.
\end{equation}

{Indeed, the series} $\sum\limits_{j \in A} a_j$ {is conditionally convergent. Hence any real number} $r$ {can be estimated by the partial sums of a series} $\sum\limits_{j \in A} a_{\sigma_{1}(j)}$ {for some permutation} $\sigma_1 \colon \N \to \N$. {Obviously, we can take} $\sigma_1(j)=\sigma(j)$ for $j \leq k_n$. So for some $k_{n+1}$ we can have
$$ \left|  \sum\limits_{j=1  , \, j \in A}^{k_{n+1}} a_{\sigma_1(j)}-\left( b_{n+1} - \sum\limits_{j=1  , \, j \notin A}^{k'_{n+1} } a_{j}  \right)   \right|     \leq \frac{1}{2(n+1)}, $$
what, defining $\sigma(j):= \sigma_1(j)$ for $k_n+1 \leq j \leq k_{n+1}$ {gives us \eqref{bn}. Of course we can assume that} $k_{n+1}$ {is greater than} $k'_{n+1}$. {Making use of \eqref{n+1} and \eqref{bn} we obtain}
\begin{align*}
\left|S_{k_{n+1}}-b_{n+1}\right|
&=\left| \sum\limits_{j=1}^{k_{n+1}} a_{\sigma(j)} -  b_{n+1}\right| \\&= \left| \sum\limits_{j=1}^{k_{n}} a_{\sigma(j)} +   \sum\limits_{j=k_n+1  , \, j \in A}^{k_{n+1}} a_{\sigma(j)}   +  \sum\limits_{j=k_n+1  , \, j \notin A}^{k_{n+1}} a_{j}  -  b_{n+1}\right| 
\\&= \left| \sum\limits_{j=1}^{k_{n}} a_{\sigma(j)} +   \sum\limits_{j=k_n+1  , \, j \in A}^{k_{n+1}} a_{\sigma(j)}   +  
\sum\limits_{j=k_n+1  , \, j \notin A}^{k_{n+1}} a_{j} - \right. \\ &\qquad - \left. \sum\limits_{j=k_n+1  , \, j \notin A}^{k'_{n+1}} a_{j} + \sum\limits_{j=k_n+1  , \, j \notin A}^{k'_{n+1}} a_{j} -  b_{n+1}\right|
\\&\leq \left| \sum\limits_{j=k_n+1  , \, j \in A}^{k_{n+1}} a_{\sigma(j)}   -\(b_{n+1} - \sum\limits_{j=1}^{k_{n}} a_{\sigma(j)}  - \sum\limits_{j=k_n+1  , \, j \notin A}^{k'_{n+1}} a_{j}  \) \right| \\&+   
\left|
\sum\limits_{j=k'_{n+1}+1  , \, j \notin A}^{k_{n+1}} a_{j}           
 \right| \leq  \frac{1}{2(n+1)} + \frac{1}{2(n+1)}  = \frac{1}{(n+1)}.
\end{align*}

Finally, let us observe that our definition of the value $\sigma(k_n^*)$ guarantees the surjectivity of $\sigma$.
\end{proof}

Let us formulate some corollaries.

\begin{cor}\label{c1}
Let $(a_n)$ be a sequence of real numbers such that the series $\sum\limits_{n=1}^\infty a_n$ is potentially conditionally convergent. Next, let $(b_n)$ be an arbitrary sequence of real numbers. Then there exists a permutation $\sigma\colon \N \to \N$ such that the sequence  $(b_n)$ is $c_0$-equivalent to some subsequence of the sequence of partial sums of the series $\sum\limits_{n=1}^\infty a_{\sigma(n)}$.
\end{cor}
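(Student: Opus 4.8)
The plan is to deduce Corollary \ref{c1} from Theorem \ref{t} by first replacing $(a_n)$ with a rearrangement that turns the ambient series into a convergent one. Since $\sum a_n^+ = +\infty$, $\sum a_n^- = +\infty$ and $a_n \to 0$, the classical Riemann rearrangement theorem provides a permutation $\rho \colon \N \to \N$ for which $\sum_{n=1}^\infty a_{\rho(n)}$ converges (say to $0$; the value plays no role). This is the only step that uses the three defining conditions of potential conditional convergence, and it is exactly the ``convergence'' half of Riemann's theorem.

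Next I would note that this rearranged series is actually conditionally convergent. Writing $c_n := a_{\rho(n)}$, we have $\sum_{n=1}^\infty |c_n| = \sum_{n=1}^\infty |a_n| \geq \sum_{n=1}^\infty a_n^+ = +\infty$, since reordering a series of nonnegative terms does not change its (possibly infinite) value; thus $\sum c_n$ converges but not absolutely. Hence Theorem \ref{t} can be applied to the sequence $(c_n)$ with $A = \N$: the assumption that $\sum_{n\in A} c_n$ is conditionally convergent is precisely what has just been checked, and the condition ``$\sigma(n)=n$ for $n\notin A$'' becomes vacuous. This yields a permutation $\tau \colon \N \to \N$ such that $(b_n)$ is $c_0$-equivalent to some subsequence of the sequence of partial sums of $\sum_{n=1}^\infty c_{\tau(n)}$.

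Finally I would set $\sigma := \rho \circ \tau$, again a permutation of $\N$, and observe that for every $N\in\N$ one has $\sum_{j=1}^N c_{\tau(j)} = \sum_{j=1}^N a_{\rho(\tau(j))} = \sum_{j=1}^N a_{\sigma(j)}$, so the two sequences of partial sums coincide entry by entry. Therefore $(b_n)$ is $c_0$-equivalent to a subsequence of the partial sums of $\sum_{n=1}^\infty a_{\sigma(n)}$, which is what the corollary asserts.

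No step here is genuinely difficult; all the delicate construction is already contained in Theorem \ref{t}, and the reduction uses only standard facts. The single point worth emphasizing is that the preliminary rearrangement $\rho$ is really needed, because a potentially conditionally convergent series need not converge at all (for instance the one with terms $1,1,-1,\tfrac12,\tfrac12,-\tfrac12,\tfrac13,\tfrac13,-\tfrac13,\dots$ has partial sums tending to $+\infty$), so Theorem \ref{t} cannot be invoked for $(a_n)$ directly.
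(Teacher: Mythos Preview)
Your proof is correct and follows essentially the same route as the paper's: rearrange first via Riemann's theorem to obtain a convergent (hence conditionally convergent) series, apply Theorem~\ref{t} with $A=\N$, and compose the two permutations. Your write-up is in fact more careful than the paper's on two minor points: you explicitly verify conditional convergence of the rearranged series, and your composition $\sigma=\rho\circ\tau$ is the correct order (the paper writes $\sigma'\circ\pi$ where $\pi\circ\sigma'$ is meant).
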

\begin{proof}
There exists a permutation $\pi\colon\N\to\N$ such that the series $\sum\limits_{n=1}^\infty a_\pi(n)$ is convergent. Apply our theorem for this series and the set $A=\N$ to get a permutation $\sigma'\colon\N\to\N$ such that the sequence  $(b_n)$ is $c_0$-equivalent to some subsequence of the sequence of partial sums of the series $\sum\limits_{n=1}^\infty a_{(\sigma'\circ \pi)(n)}$. To finish the proof take $\sigma = \sigma'\circ \pi$.
\end{proof}

W. Wilczy\'nski \cite{Wi} has proved that for every conditionally convergent series $\sum\limits_{n=1}^\infty a_n$ there exists a set $A$ of density zero such that the series $\sum\limits_{n=1, n \in A}^\infty a_n$ is conditionally convergent. He posed an open problem in \cite{Wi} to characterize all set ideals $\I$ with the property that for every conditionally convergent series $\sum\limits_{n=1}^\infty a_n$ there exists a set $A \in \I$ such that the series $\sum\limits_{n=1, n \in A}^\infty a_n$ is conditionally convergent.  R. Filip\'{o}w and P. Szuca \cite{FSz} answered this problem and they proved that a set ideal $\I$ has the above-mentioned property if and only if it cannot be extended to a summable ideal. 

\medskip
 
On joining \cite{Wi}*{Lemma} with our main result, we can take the set $A$ to be of density zero.

\begin{cor}
Let $(a_n)$ be a sequence of real numbers such that the series $\sum\limits_{n=1}^\infty a_n$ is conditionally convergent. Next, let $(b_n)$ be an arbitrary sequence of real numbers. Then there exists a permutation $\sigma\colon \N \to \N$ such that  
$$\lim_{n\to \infty}\frac{1}{n}|\{n \in \N : \sigma(n) \neq n\}| = 0$$ and the sequence $(b_n)$ is $c_0$-equivalent to some subsequence of the sequence of partial sums of the series $\sum\limits_{n=1}^\infty a_{\sigma(n)}$.
\end{cor}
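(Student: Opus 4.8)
The plan is to combine the Wilczy\'nski lemma with Theorem~\ref{t} in the most direct way possible, so this final corollary should require essentially no new work. Since $\sum_{n=1}^\infty a_n$ is conditionally convergent by hypothesis, Wilczy\'nski's lemma (\cite{Wi}*{Lemma}) applies and furnishes a set $A \subseteq \N$ of density zero such that the subseries $\sum_{n \in A} a_n$ is again conditionally convergent. With $A$ fixed, the hypotheses of Theorem~\ref{t} are satisfied: $\sum_{n=1}^\infty a_n$ converges and $\sum_{n \in A} a_n$ is conditionally convergent.

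I would then invoke Theorem~\ref{t} directly to obtain a permutation $\sigma\colon\N\to\N$ with $\sigma(n)=n$ for all $n\notin A$ and such that $(b_n)$ is $c_0$-equivalent to some subsequence of the sequence of partial sums of $\sum_{n=1}^\infty a_{\sigma(n)}$. The only remaining point is the density statement. Because $\sigma$ fixes every point outside $A$, the set $\{n\in\N : \sigma(n)\neq n\}$ is contained in $A$, hence
\[
\frac{1}{n}\,\bigl|\{k\le n : \sigma(k)\neq k\}\bigr| \le \frac{1}{n}\,|A\cap\{1,\dots,n\}| \xrightarrow[n\to\infty]{} 0,
\]
where the limit is zero precisely because $A$ has density zero. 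This gives the desired conclusion.

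I do not anticipate any genuine obstacle here: the corollary is a straightforward concatenation of a cited lemma and the main theorem, with the density bound following from the inclusion $\{\sigma(n)\neq n\}\subseteq A$. The only mild subtlety worth flagging is purely cosmetic — the indexing in the displayed density limit in the corollary's statement is written as $|\{n\in\N : \sigma(n)\neq n\}|$ rather than as a counting function of initial segments; I would read it (as the authors evidently intend) as the upper density of the set $\{n : \sigma(n)\neq n\}$, which is what the inclusion into the density-zero set $A$ controls. With that understood, the proof is complete in a few lines.
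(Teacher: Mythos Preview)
Your proposal is correct and follows exactly the approach the paper intends: apply Wilczy\'nski's lemma to obtain a density-zero set $A$ with $\sum_{n\in A}a_n$ conditionally convergent, then invoke Theorem~\ref{t}, and finally use the inclusion $\{n:\sigma(n)\neq n\}\subseteq A$ to conclude the density statement. Your remark about reading the displayed limit as the density of $\{n:\sigma(n)\neq n\}$ is also appropriate.
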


It is worth to note a connection of our results with the Riemann rearrangement theorem. One can reformulate this classical result as follows:

\begin{thm}[Riemann rearrangement theorem]
Let $(a_n)$ be a sequence of real numbers such that the series $\sum\limits_{n=1}^\infty a_n$ is conditionally convergent, let $b\in \R\cup\{\pm\infty\}$ be arbitrary and let  $(b_n)$ be a sequence of real numbers tending to $b$. Then there exists a permutation $\sigma\colon \N \to \N$ such that  the sequence $(b_n)$ is $c_0$-equivalent to the sequence of partial sums of the series $\sum\limits_{n=1}^\infty a_{\sigma(n)}$.
\end{thm}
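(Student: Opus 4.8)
The plan is to deduce this reformulated Riemann rearrangement theorem from Theorem~\ref{t} (or equivalently Corollary~\ref{c1}), followed by a short additional argument handling the difference between ``$c_0$-equivalent to a subsequence of partial sums'' and ``$c_0$-equivalent to the full sequence of partial sums.'' First I would apply Theorem~\ref{t} with the given conditionally convergent series and $A = \N$: this yields a permutation $\sigma$ and a strictly increasing sequence $(k_n)$ with $S_{k_n} - b_n \to 0$. The remaining task is to modify $\sigma$ so that \emph{every} partial sum, not merely those indexed by $(k_n)$, tracks the corresponding term of $(b_n)$. The key observation is that between the ``checkpoints'' $k_n$ and $k_{n+1}$ one has freedom in the order in which the still-unused terms are inserted, and since $a_m \to 0$, consecutive partial sums differ by arbitrarily little for large indices; so on the block $(k_n, k_{n+1}]$ the partial sums can be made to move monotonically (or in small steps) from near $b_n$ to near $b_{n+1}$, never straying far from the interval between them.

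More concretely, I would not invoke Theorem~\ref{t} as a black box but rather re-run its inductive construction, strengthening the inductive hypothesis. Instead of only controlling $|S_{k_n} - b_n|$, I would demand that for all $j$ with $k_n < j \le k_{n+1}$ the partial sum $S_j$ lies within, say, $\tfrac{1}{n} + \max_{m \ge k_n}|a_m|$ of the set $\{\,t b_n + (1-t) b_{n+1} : t \in [0,1]\,\}$. This is arranged by, at each step, appending available positive terms while the running sum is below the target and available negative terms while it is above — exactly the classical Riemann bookkeeping — so that the partial sums execute a ``staircase'' from $b_n$ to $b_{n+1}$ with step size bounded by the tail supremum of $|a_m|$, which tends to $0$. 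The surjectivity device from the original proof (reserving $\sigma(k_n^*)$ to be the least unused value) carries over verbatim. Since $b_n \to b$, for every $\eps > 0$ eventually all of $b_n, b_{n+1}$ and hence the whole interval between them lie within $\eps$ of $b$ (when $b$ is finite) or exceed $1/\eps$ in absolute value (when $b = \pm\infty$); combined with the vanishing step size this gives $S_j \to b$, i.e.\ $(S_j)$ is $c_0$-equivalent to $(b_j)$.

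For the infinite cases $b = +\infty$ (symmetrically $b = -\infty$) a small care is needed: one wants $S_j$ to increase past each $b_n$ without dipping back, so rather than a two-sided staircase one simply keeps adding available positive terms until $S_j > b_{n+1}$, which is possible because $\sum a_m^+ = +\infty$; the as-yet-unused negative terms are parked and eventually consumed in later blocks, with the surjectivity reservation guaranteeing none is forgotten. One must check that the negative terms do not cause a permanent drop below the $b_n$'s — but this is handled by only releasing finitely many of them per block, chosen so that their total is dominated by the positive mass already committed, again using $a_m \to 0$.

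The main obstacle, and the only genuinely new content beyond Theorem~\ref{t}, is precisely this interpolation between checkpoints: ensuring that the partial sums on each block stay uniformly close to the segment joining $b_n$ and $b_{n+1}$ (resp.\ escape to $\pm\infty$ monotonically enough) while simultaneously respecting the bijectivity constraint. The estimate that makes it work is the trivial but essential one that appending a single term changes the partial sum by at most $\sup_{m > k_n} |a_m| \to 0$, so any target can be approached to within that precision before overshooting; everything else is the standard Riemann rearrangement accounting, and I would present it briefly rather than in full detail since the pattern mirrors the proof of Theorem~\ref{t} already given.
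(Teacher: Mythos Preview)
The paper does not supply a proof of this statement; it is presented merely as a reformulation of the classical Riemann rearrangement theorem, offered for comparison with Corollary~\ref{c1}, and no argument is given.

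For finite $b$ your construction is correct but far more elaborate than needed: by the classical Riemann theorem there is a permutation $\sigma$ with $S_n\to b$, and since $b_n\to b$ as well, $|S_n-b_n|\to 0$ follows at once. No interpolation between checkpoints and no appeal to Theorem~\ref{t} is required.

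There is, however, a genuine gap in your treatment of $b=\pm\infty$, and in fact the statement as literally written cannot hold for arbitrary $(b_n)\to\pm\infty$. Take $b_n=n$. For any permutation $\sigma$ one has $S_{n+1}-S_n=a_{\sigma(n+1)}\to 0$, so eventually $|S_{n+1}-S_n|<\tfrac12$; yet $|S_n-n|\to 0$ together with $|S_{n+1}-(n+1)|\to 0$ would force $|S_{n+1}-S_n|\to 1$, a contradiction. Your argument slips at exactly this point: the sentence ``this gives $S_j\to b$, i.e.\ $(S_j)$ is $c_0$-equivalent to $(b_j)$'' is invalid when $b=\pm\infty$, since two real sequences both diverging to $+\infty$ need not have difference tending to $0$. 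Presumably the paper intends the infinite case to be read with a metric on the extended reals (or simply as $S_n\to\pm\infty$); under either reading the claim again reduces immediately to the classical theorem, and the additional block-by-block machinery you propose is unnecessary.
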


It is easy to note two differences between our Corollary \ref{c1} and the Riemann rearrangement theorem. First, we do not assume that the sequence $(b_n)$  is convergent (in fact, it can be even unbounded in our settings). But in return for it, in general only a subsequence of the sequence of partial sums of the series $\sum\limits_{n=1}^\infty a_{\sigma(n)}$ is $c_0$-equivalent to $(b_n)$.

\medskip

Recently several results related to the Riemann rearrangement theorem and conditional convergence of series were published by several authors, see \cites{Ba, Ch, DS, FF, G, M, P, W1, W2, W3}.

\medskip

We will terminate the paper with a generalization of an interesting result of Burgin \cite{B0}*{Corollary 4.6} which extends the classical Riemann rearrangement theorem to the case of the so-called hypernumbers. However, the original proof of Bounded Riemann Series Theorem (BRST, \cite{B0}*{Theorem 4.5}) contains a gap which cannot be removed directly. We give some counterexamples to this statement. We provide an alternative argument and at the same time we obtain a slightly more general statement than the Burgin's Generalized Riemann Series Theorem (GRST, \cite{B0}*{Corollary 4.6}).

Hypernumbers have been founded by Burgin and they provide an extension of the real line. This extension is of importance in many areas of mathematics and in physics. The elegant and deep theory of hypernumbers allows one in particular to differentiate in an extended sense every real function.
We recall here only basic notions we need in what follows. For a comprehensive study of the topic the reader is referred to the excellent monograph by Burgin \cite{B1}.
Let $\R^\N$ stand for the set of all sequences of real numbers. Following Burgin's notation, the set of $c_0$-equivalence classes of  $\R^\N$ is denoted by $\R_\omega$ and its elements are called \emph{hypernumbers}. Given a sequence $(a_n)\in \R^\N$, by $\Hn( (a_n))$ we denote its equivalence class, i.e. the hypernumber which is represented by this sequence. In particular, every real number can be identified with a constant sequence equal to it and therefore the real line $\R$ is a proper subset of the set of all hypernumbers $\R_\omega$. A hypernumber is called \emph{bounded} if it is generated by a bounded sequence. If $\alpha, \beta \in \R_\omega$, then we say that $\beta$ is a \emph{subnumber} of $\alpha$, which is denoted by $ \beta \Subset \alpha $, if there exist two sequences $(a_n), (b_n)$ such that $(b_n)$ is a subsequence of $(a_n)$, $\alpha = \Hn( (a_n))$ and $\beta = \Hn( (b_n))$.  

Following Burgin \cite{B0}, given a series of real numbers $\sum_{n=1}^\infty a_n$, convergent or not, we say that a series  $\sum\limits_{n=1}^\infty b_n$ is a \emph{quotient series} of $\sum\limits_{n=1}^\infty a_n$ if there is a surjective map $p\colon \N \to \N$ such that for every $i \in \N$ the set $p^{-1}(i)$ consists of a finite number of consecutive elements of $\N$ and $b_i=\sum\limits_{j \in p^{-1}(i)}a_j$. If additionally the map $p$ is monotone, then we speak about a \emph{monotone quotient series}.

Burgin \cite{B0} introduced the concept of \emph{analytical sums} of a series as a hypernumber generated by its partial sums. For an arbitrary series of real numbers $\sum\limits_{n=1}^\infty a_n$ we write $\mathrm{an}(\sum\limits_{n=1}^\infty a_n) = \Hn((A_n))$ , where $A_n=\sum\limits_{i=1}^n a_i$ for $n \in \N$.
If the series $\sum\limits_{n=1}^\infty a_n$ is convergent, then its analytical sum coincides with its topological sum, i.e. the real number equal to its sum, which is denoted by $\mathrm{top}(\sum\limits_{n=1}^\infty a_n)$. For series divergent to $\pm \infty$ one can also speak about its topological sums. 

One can observe that if $\alpha, \beta \in \R_\omega$ satisfy $$\alpha = \mathrm{an}\(\sum\limits_{n=1}^\infty a_n \), \quad \beta = \mathrm{an}\(\sum\limits_{n=1}^\infty b_n\)$$ for some $(a_n), (b_n) \in \R^\N$, then the series  $\sum\limits_{n=1}^\infty b_n$ is a monotone quotient series of $\sum\limits_{n=1}^\infty a_n$ if and only if $ \beta \Subset \alpha $ (see \cite{B0}*{Lemma 4.4}).


The Bounded Riemann Series Theorem (BRST) \cite{B0}*{Theorem 4.5} deals with a series $\sum\limits_{n=1}^\infty a_n$ which is bounded and not absolutely convergent. Thus $\mathrm{an}(\sum\limits_{n=1}^\infty a_n)$ is a bounded hypernumber. BRST assets that for every hypernumber $\alpha \in \R_\omega$ there exists a monotone quotient series $\sum\limits_{n=1}^\infty d_n$ of $\sum\limits_{n=1}^\infty a_n$, a permutation $\sum_{n=1}^\infty b_n$ of $\sum\limits_{n=1}^\infty d_n$ and a monotone quotient series $\sum\limits_{n=1}^\infty c_n$ of $\sum\limits_{n=1}^\infty b_n$ such that $\alpha = \mathrm{an}(\sum\limits_{n=1}^\infty c_n)$. This statement is in general not true, which is exhibited by examples below.
Moreover, an inspection of the proof of \cite{B0}*{Theorem 4.5} allows us to detect an invalid argument that was used. Namely, it is not true that a monotone quotient series of a series which is not absolutely convergent is necessarily not absolutely convergent. Consequently, Riemann rearrangement theorem cannot be applied. Our examples shed some more light on the situation.

\begin{ex}
Let $a_n=(-1)^{n+1}$ for $n \in \N$. Then the series $\sum\limits_{n=1}^\infty a_n$  is not absolutely convergent (in fact it is divergent) and its analytical sum is a bounded hypernumber. Thus the assumptions of BRST are satisfied. Note that every converging subsequence of its partial sums $A_n$ must be from some point equal either to $0$ or $1$. Therefore, every monotone quotient series $\sum\limits_{n=1}^\infty d_n$ of $\sum\limits_{n=1}^\infty a_n$ which is convergent, converges to $0$ or $1$. 
Thus, every permutation $\sum\limits_{n=1}^\infty b_n$ of $\sum\limits_{n=1}^\infty d_n$ is either equal to the zero series, or to a series which contains $1$ at one place. 
Consequently, every monotone quotient series $\sum\limits_{n=1}^\infty c_n$ of $\sum\limits_{n=1}^\infty b_n$ is also either equal to the zero series, or it contains $1$ at one place.
Two in a sense typical examples of monotone quotient series of $\sum\limits_{n=1}^\infty a_n$  are $0+0+ \dots $ and  $1+0+0+ \dots $. 
The first case can be obtained by the projection $j_0\colon \N \to \N$ given by $j_0(2k-1)=j_0(2k) = k$ for $k\in \N$, whereas the second one by the projection $j_1\colon \N \to \N$ given by $j_1(1) = 1$ and $j_1(2k)=j_1(2k+1) = k$ for $k\in \N$. 
Thus, only two hypernumbers $\alpha$, namely $0$ and $1$ can be achieved from the series $\sum\limits_{n=1}^\infty a_n$ by means of the proof of Burgin's BRST. Note however that every hypernumber which is of the form $\Hn (q_n)$ with all $q_n$ being integers, can be obtained as a monotone quotient series of a permutation of $\sum\limits_{n=1}^\infty a_n$. Moreover, any hypernumber which is not of this form cannot be obtained as a monotone quotient series of $\sum\limits_{n=1}^\infty a_n$. 
\end{ex}

\begin{ex}
Let $a_n=(-1)^{n+1}/n$ for $n \in \N$. Then the series $\sum\limits_{n=1}^\infty a_n$  is conditionally convergent (to $\ln 2$). Again, the assumptions of BRST are satisfied. This time there exists a monotone quotient series $\sum\limits_{n=1}^\infty d_n$ of $\sum\limits_{n=1}^\infty a_n$ which is absolutely convergent. Indeed, one can take any projection from the previous example, if we take  $j_0\colon \N \to \N$ given by $j_0(2k-1)=j_0(2k) = k$ for $k\in \N$, then we obtain a convergent series of positive numbers. But in contrast to the previous example, this time it is possible to achieve every hypernumber as a monotone quotient series of a permutation of the original one. This is a straightforward consequence of Corollary \ref{cf} below. 
\end{ex}

\begin{cor}\label{cf}
Assume that $\alpha \in \R_\omega$ is an arbitrary hypernumber and  $(a_n)$ is  a sequence of real numbers such that the series $\sum\limits_{n=1}^\infty a_n$ is potentially conditionally convergent.
Then, there exists a permutation $\sigma\colon \N\to\N$ such that $\alpha$ is equal to a monotone quotient series of the series $\sum\limits_{n=1}^\infty a_{\sigma(n)}$.
\end{cor}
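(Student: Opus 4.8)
The plan is to derive Corollary \ref{cf} from Corollary \ref{c1} by translating between the language of $c_0$-equivalence of partial sums and the language of monotone quotient series. First I would fix a sequence $(q_n)$ representing the hypernumber $\alpha$, i.e.\ $\alpha = \Hn((q_n))$. The goal is to find a permutation $\sigma$ of $\N$ and a surjective monotone $p\colon\N\to\N$ with $p^{-1}(i)$ consisting of finitely many consecutive integers, such that the series $\sum_n c_n$ defined by $c_i = \sum_{j\in p^{-1}(i)} a_{\sigma(j)}$ has $\mathrm{an}(\sum_n c_n) = \alpha$. Since the partial sums of a monotone quotient series $\sum_n c_n$ of $\sum_n a_{\sigma(n)}$ are precisely a subsequence of the partial sums $(S_k)$ of $\sum_n a_{\sigma(n)}$ — namely $C_i = S_{\max p^{-1}(i)}$ — producing such a monotone quotient series with analytical sum $\alpha$ is exactly the same as producing a subsequence $(S_{k_i})$ of the partial sums of $\sum_n a_{\sigma(n)}$ that is $c_0$-equivalent to $(q_n)$, provided we can choose the $k_i$ to be strictly increasing (so that the blocks $p^{-1}(i)$ are nonempty consecutive intervals). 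This observation is essentially \cite{B0}*{Lemma 4.4} quoted above, relating $\Subset$ to monotone quotient series of analytical sums.

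Next I would apply Corollary \ref{c1} with $(b_n) := (q_n)$: since $\sum_n a_n$ is potentially conditionally convergent, there is a permutation $\sigma$ such that $(q_n)$ is $c_0$-equivalent to some subsequence $(S_{k_n})$ of the partial sums of $\sum_n a_{\sigma(n)}$. Inspecting the proof of Theorem \ref{t}, the constructed sequence $(k_n)$ is in fact \emph{strictly increasing}, so the blocks $I_i := \{k_{i-1}+1,\dots,k_i\}$ (with $k_0 = 0$) are nonempty consecutive intervals partitioning $\N$, and defining $p(j) = i$ for $j \in I_i$ gives a surjective monotone map with $p^{-1}(i) = I_i$ finite and consecutive. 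Setting $c_i := \sum_{j\in I_i} a_{\sigma(j)}$ yields a monotone quotient series of $\sum_n a_{\sigma(n)}$ whose $i$-th partial sum is $S_{k_i}$; hence $\mathrm{an}(\sum_i c_i) = \Hn((S_{k_i})) = \Hn((q_n)) = \alpha$, as required.

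The one point that needs care — and the main (minor) obstacle — is ensuring the subsequence index set $(k_n)$ can genuinely be taken strictly increasing and cofinal, so that the resulting $p$ is a legitimate projection defining a monotone quotient series in Burgin's sense. Strict monotonicity is guaranteed directly by the construction in Theorem \ref{t}; cofinality of $(k_n)$ (so that every element of $\N$ lies in some block $I_i$) is automatic because a strictly increasing sequence of positive integers is unbounded, so $\bigcup_i I_i = \N$. Thus no index of $\N$ is left out, $p$ is well defined on all of $\N$, and the correspondence between the $c_0$-equivalent subsequence of partial sums and the analytical sum of the monotone quotient series is exact. This completes the reduction; everything else is the bookkeeping already carried out in the proof of Theorem \ref{t}.
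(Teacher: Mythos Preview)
Your proposal is correct and is essentially the same approach as the paper's: the paper's entire proof is the one line ``Follows immediately from our Theorem~\ref{t}'', and you have simply spelled out the translation it leaves implicit---namely that a subsequence $(S_{k_n})$ of partial sums with $(k_n)$ strictly increasing is exactly the sequence of partial sums of a monotone quotient series (this is the content of \cite{B0}*{Lemma~4.4} recalled in the paper), so the $c_0$-equivalence $S_{k_n}\sim q_n$ from Corollary~\ref{c1} gives $\mathrm{an}(\sum c_i)=\Hn((q_n))=\alpha$. Your care about strict monotonicity and cofinality of $(k_n)$ is warranted but already guaranteed by the construction in Theorem~\ref{t}, as you note.
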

\begin{proof}
Follows immediately from our Theorem \ref{t}.
\end{proof}

\begin{bibdiv}
\begin{biblist}

\bib{BS}{article}{
   author={Balcar, Bohuslav},
   author={Simon, Petr},
   title={Appendix on general topology},
   conference={
      title={Handbook of Boolean algebras, Vol. 3},
   },
   book={
      publisher={North-Holland, Amsterdam},
   },
   date={1989},
   pages={1239--1267},
}

\bib{Ba}{article}{
   author={Banakiewicz, Micha\l },
   author={Hanson, Bruce},
   author={Pierce, Pamela},
   author={Prus-Wi\'{s}niowski, Franciszek},
   title={A Riemann-type theorem for segmentally alternating series},
   journal={Bull. Iranian Math. Soc.},
   volume={44},
   date={2018},
   number={5},
   pages={1303--1314},
}

\bibitem{BBPW} Micha\l{}  Banakiewicz,  Artur Bartoszewicz,  Franciszek Prus-Wi\'{s}niowski, \emph{The cented of distances of some multigeometric series}, (preprint) arXiv: 1907.03800v1 [math.CA] 8 Jul 2019.

\bib{BG1}{article}{
   author={Bartoszewicz, Artur},
   author={G\l \c{a}b, Szymon},
   title={Algebrability of conditionally convergent series with Cauchy
   product},
   journal={J. Math. Anal. Appl.},
   volume={385},
   date={2012},
   number={2},
   pages={693--697},
}

\bib{BGW}{article}{
   author={Bartoszewicz, Artur},
   author={G\l \c{a}b, Szymon},
   author={Wachowicz, Artur},
   title={Remarks on ideal boundedness, convergence and variation of
   sequences},
   journal={J. Math. Anal. Appl.},
   volume={375},
   date={2011},
   number={2},
   pages={431--435},
}

\bib{BGM}{article}{
   author={Bartoszewicz, Artur},
   author={G\l \polhk ab, Szymon},
   author={Marchwicki, Jacek},
   title={Achievement sets of conditionally convergent series},
   journal={Colloq. Math.},
   volume={152},
   date={2018},
   number={2},
   pages={235--254},
}

\bib{BPW}{article}{
   author={Bielas, Wojciech},
   author={Plewik, Szymon},
   author={Walczy\'{n}ska, Marta},
   title={On the center of distances},
   journal={Eur. J. Math.},
   volume={4},
   date={2018},
   number={2},
   pages={687--698},
}

	\bib{B0}{article}{
   author={Burgin, Mark},
   title={Inequalities in Series and Summation in Hypernumbers, in: Advances in inequalities for series, edited by Sever S. Dragomir and Anthony Sofo},
   publisher={Nova Sci. Publ., Hauppauge, New York},
   date={2008},
   pages={89-120},
  }	
		
\bib{B1}{book}{
   author={Burgin, Mark},
   title={Hypernumbers and extrafunctions},
   series={SpringerBriefs in Mathematics},
   note={Extending the classical calculus},
   publisher={Springer, New York},
   date={2012},
   pages={viii+160},
}

\bib{Ch}{article}{
   author={Charatonik, W\l odzimierz J.},
   author={Samulewicz, Alicja},
   author={Witu\l a, Roman},
   title={Limit sets in normed linear spaces},
   journal={Colloq. Math.},
   volume={147},
   date={2017},
   number={1},
   pages={35--42},
}
	
	\bib{DS}{article}{
   author={Dybskiy, Yuriy},
   author={Slutsky, Konstantin},
   title={Riemann rearrangement theorem for some types of convergence},
   journal={J. Math. Anal. Appl.},
   volume={373},
   date={2011},
   number={2},
   pages={605--613},
}

\bib{FSz}{article}{
   author={Filip\'{o}w, Rafa\l },
   author={Szuca, Piotr},
   title={Rearrangement of conditionally convergent series on a small set},
   journal={J. Math. Anal. Appl.},
   volume={362},
   date={2010},
   number={1},
   pages={64--71},
}

\bib{FF}{article}{
   author={Freniche, Francisco J.},
   title={On Riemann's rearrangement theorem for the alternating harmonic
   series},
   journal={Amer. Math. Monthly},
   volume={117},
   date={2010},
   number={5},
   pages={442--448},
}
		
		\bib{G}{article}{
   author={Grahl, J\"{u}rgen},
   author={Nevo, Shahar},
   title={On Riemann's theorem about conditionally convergent series},
   journal={New Zealand J. Math.},
   volume={43},
   date={2013},
   pages={85--93},
}

\bibitem{JT} Izabela J\'o\'zwik, Ma\l{}gorzata Terepeta, \emph{Polish glance on the Riemann Rearrangement Theorem} (in Polish), Wiadomo\'sci Matematyczne 55/1 (2019), 143--157.

\bib{M}{article}{
   author={Marchwicki, Jacek},
   title={Achievement sets and sum ranges with ideal supports},
   journal={Filomat},
   volume={32},
   date={2018},
   number={14},
   pages={4911--4922},
}

\bib{P}{article}{
   author={Prus-Wi\'{s}niowski, Franciszek},
   title={Two refinements of the Riemann derangement theorem},
   conference={
      title={Real functions, density topology and related topics},
   },
   book={
      publisher={\L \'{o}d\'{z} Univ. Press, \L \'{o}d\'{z}},
   },
   date={2011},
   pages={165--172},
}

	\bib{S}{book}{
   author={Semadeni, Zbigniew},
   title={Banach spaces of continuous functions. Vol. I},
   note={Monografie Matematyczne, Tom 55},
   publisher={PWN---Polish Scientific Publishers, Warsaw},
   date={1971},
   pages={584 pp. (errata insert)},
}

\bibitem{Wi}		
W\l{}adysław Wilczy\'nski, \emph{On Riemann derangement theorem}, S\l{}up. Pr. Mat.-Fiz. 4 (2007) 79--82. 		
		
	\bib{W1}{article}{
   author={Witu\l a, Roman},
   author={Hetmaniok, Edyta},
   author={Kaczmarek, Konrad},
   title={On series whose rearrangements possess discrete sets of limit
   points},
   journal={J. Appl. Anal.},
   volume={20},
   date={2014},
   number={1},
   pages={93--96},
}	

\bib{W2}{article}{
   author={Witu\l a, Roman},
   title={Permutations preserving the convergence or the sum of series---a
   survey},
   conference={
      title={Monograph on the occasion of 100th birthday anniversary of
      Zygmunt Zahorski},
   },
   book={
      publisher={Wydaw. Politech. \'{S}l., Gliwice},
   },
   date={2015},
   pages={169--190},
}

\bib{W3}{article}{
   author={Witu\l a, Roman},
   author={Hetmaniok, Edyta},
   author={S\l ota, Damian},
   title={Some new facts about group $\mathcal{G}$ generated by the family of
   convergent permutations},
   journal={Open Math.},
   volume={15},
   date={2017},
   number={1},
   pages={568--577},
}

\end{biblist}
\end{bibdiv}

\end{document}